\newcommand{\tr}{\mathrm{tr}}
\newcommand{\E}{\mathbb E}
\newcommand{\R}{\mathbb{R}}
\newtheorem{lemma}{Lemma}[section]
\newtheorem{theorem*}{Theorem}[section]
\newtheorem{corollary}{Corollary}[section]
\title{
\textbf{\texttt{InterQ:}} A DQN Framework for Optimal Intermittent Control 
}
\author{Shubham Aggarwal$^*$, Dipankar Maity$^*$, Tamer Ba{\c s}ar \vspace{-1cm}
\thanks{* denotes equal contribution. Research of the authors was supported by the ARL grant ARL DCIST CRA W911NF-17-2-0181. Research of SA and TB was also supported in part by the ARO grant W911NF-24-1-0085.
}
\thanks{
Shubham Aggarwal and Tamer Ba{\c s}ar are with the Coordinated Science Laboratory at the University of Illinois Urbana-Champaign, Urbana, IL, USA-61801. 
Dipankar Maity is with Department of Electrical and Computer Engineering at the University of North Carolina at Charlotte, Charlotte, NC, USA-28223.
(Emails:\texttt{\{sa57, basar1\}@illinois.edu, dmaity@charlotte.edu)}% <-this % stops a space
}
}
\begin{document}

\maketitle
\thispagestyle{empty}
\begin{abstract}
In this \textit{letter}, we explore the communication-control co-design of discrete-time stochastic linear systems through reinforcement learning. Specifically, we examine a closed-loop system involving two sequential decision-makers: a scheduler and a controller. The scheduler continuously monitors the system's state but transmits it to the controller intermittently to balance the communication cost and control performance.
The controller, in turn, determines the control input based on the intermittently received information. Given the partially nested information structure, we show that the optimal control policy follows a certainty-equivalence form. Subsequently, we analyze the qualitative behavior of the scheduling policy. To develop the optimal scheduling policy, we propose $\texttt{InterQ}$, a deep reinforcement learning algorithm which uses a deep neural network to approximate the Q--function. Through extensive numerical evaluations, we analyze the scheduling landscape and further compare our approach against two baseline strategies: (a) a multi-period periodic scheduling policy, and (b) an event-triggered policy. The results demonstrate that our proposed method outperforms both baselines. The open source implementation can be found at \url{https://github.com/AC-sh/InterQ}.
\end{abstract}

\begin{IEEEkeywords}
Intermittent control, Deep reinforcement learning, Deep Q-networks
\end{IEEEkeywords}

\section{Introduction}

\IEEEPARstart{I}{ntermittent} control systems have been extensively studied over the past two decades, finding applications in diverse fields such as power markets, biological control, communication networks, and multi-agent systems \cite{wen2015event,chakrabarty2017event,li2021real}. These systems typically involve two decision-making entities: a \textit{sampler} and a \textit{controller}, operating within a team setting. The controller determines control inputs based on measurements received from the sampler, whose objective is to minimize the communication burden on the system.

Research in this area has primarily focused on two objectives: \textit{event-triggered} control \cite{heemels2021event} and joint \textit{optimal control-communication} co-design \cite{molin2014optimal}. In event-triggered control, the primary goal is to stabilize the dynamical system, with triggering conditions derived using Lyapunov-based analysis. In contrast, the second approach aims to optimize a given performance objective, jointly determining both the optimal controller and the optimal scheduling policy. In this work, we adopt the second approach of optimal design.

A major long-standing challenge in this framework is characterizing the optimal scheduling policy. Theoretical research in this domain faces difficulties because incorporating both control and sensing/communication decisions into the strategy set leads to a decentralized information structure, where the scheduler's actions influence the available information to the controller. 
Furthermore, the control problem dictated by the scheduling decisions is \textit{bilinear} in the system state (which corresponds to the estimation error) and the binary control variable (which corresponds to the scheduling decision), and involves a quadratic minimization objective. 
State-of-the-art results \cite{soleymani2021value,soleymani2022value} establish that the optimal scheduling policy belongs to the class of threshold policies, but its exact form depends on the value function, which remains unknown. Consequently, computing scheduling instants within an optimal control framework still remains an open problem.

In this work, we tackle this problem using deep reinforcement learning (RL). Specifically, we approximate the state-action value function at the scheduler using a neural network and employ the deep Q-Learning algorithm \cite{mnih2015human} to determine the optimal scheduling instants. The neural network is trained using the same quadratic cost function mentioned earlier. Our results indicate that the proposed policy framework outperforms both periodic triggering policies and event-triggered policies from the literature.

\textbf{Related Work:} Research on optimal decision-making in teams dates back to the foundational works of Marschak and Radner \cite{marschak1955elements,radner1962team}, which studied settings involving multiple decision-makers (DMs), each with access to different information variables. In such scenarios, the DMs independently choose policies while collectively aiming to minimize a common cost or maximize a shared reward. Since the information available to each DM varies, the design of joint optimal policies is highly dependent on the underlying information structure \cite{yuksel2013stochastic}, particulary when the information evolves dynamically, as demonstrated in seminal works by Witsenhausen \cite{witsenhausen1968counterexample}, Feldbaum \cite{feldbaum1961dual}, and Ba\c{s}ar \cite{basar2008variations}. 

The optimal intermittent control problem falls within this class of decentralized decision-making problems, where the scheduler and controller together form a two-decision-maker (2-DM) team. There has been extensive research on deriving joint controller-scheduler policies in such settings \cite{imer2005optimal,imer2010optimal,lipsa2011remote,molin2009lqg,maity2020minimal}. Notably, works such as \cite{imer2010optimal,lipsa2011remote} establish the optimality of threshold policies for scalar systems. Possible conjectures for the multivariate case were later discussed in \cite{molin2014optimal}. More recently, studies in \cite{soleymani2021value,soleymani2022value} extend this analysis to multi-dimensional Gauss-Markov processes, proving that the optimal estimator remains linear and invariant to no-communication events—a property previously established for scalar systems in \cite{imer2010optimal}, with its optimality formally proven in \cite{lipsa2011remote}. This finding enables the design of a globally optimal control policy; however, the scheduler policy remains dependent on the value function, which is unknown and cannot be computed exactly.

Motivated by these challenges in scheduler policy characterization for multivariate systems, a major contribution of this work is to address the open problem of computing optimal scheduling policies for multivariate systems using deep RL techniques in intermittent control problems involving remotely located controller-scheduler pairs. Specifically, we propose \textbf{\texttt{InterQ}}--a deep Q-learning algorithm, where the state-action value function is approximated using a multi-layer perceptron, which is trained using the bilinear estimation error dynamics and a quadratic loss function. Our extensive numerical analysis and ablation studies demonstrate that our approach outperforms both periodic scheduling policies and event-triggered policies.

\textbf{Organization: }The rest of the \textit{letter} is organized as follows. We formulate the problem in Section \ref{sec:Problem_form}. The optimal controller and scheduler policies are computed in Sections \ref{sec:OptController} and \ref{sec:optSensor}, respectively.
We provide supporting numerical simulations in Section \ref{sec:numSims} and conclude the paper with major highlights in Section \ref{sec:conc_disc}.

\textbf{Notations:} We denote the trace of a square matrix $X$ by $\tr(X)$. For two symmetric matrices $X,Y$, the notation $X \succ (\succeq) Y$ implies that $X-Y$ is positive (semi-)definite.

\section{Communication-Constrained Control}\label{sec:Problem_form}
Consider a controlled dynamical system which constitutes a triple $(\mathcal{P}, \mathcal{S}, \mathcal{C})$, where $\mathcal{P}$ represents the dynamical system, and $\mathcal{S}$ and $\mathcal{C}$ denote the scheduler and the controller, respectively. The state $x_k \in \mathbb R^{n_x}$ of $\mathcal{P}$ evolves according to the following discrete-time stochastic linear difference equation:
\begin{align}\label{eq:plant}
x_{k+1} = Ax_k + B u_k + W_k, ~~k \ge 0,
\end{align}
where $u_k \in \mathbb R^{n_u}$ denotes the control input and $W_k \in \mathbb R^{n_x}$ denotes the additive system noise, all at time $k$. We assume that the noise has zero mean and finite covariance $K_W$; the assumption on zero mean is not necessary, but only simplifies the presentation. Further, the initial state $x_0$ is also sampled from a zero-mean finite-covariance distribution. We take the noise and the initial state to be independent of each other and across time. 
In contrast to the majority of the existing work, we do not assume these noises to be Gaussian.

The state $x_k$ is controlled over a feedback loop by two active decision-makers (the scheduler and the controller) which may not be geographically colocated. This means that the scheduler, which continuously monitors the state $x$, can schedule to transmit this state value over a network to the controller for applying a control input. The running cost incurred by the controller is quadratic and is given~as:
\begin{align}\label{eq:cntrl_obj}
c(x,u):=\|x\|_Q^2 + \|u\|^2_R
\end{align}
with $Q \succeq 0$ and $R \succ 0$ denoting appropriate dimensional matrices. 
% Finally, the expectation is taken with respect to the initial condition and the system noise statistics.

Next, in contrast to perfect-state optimal control problems \cite{anderson2007optimal}, our formulation focuses on costly communication channel between the scheduler and the controller. As a result, the scheduler must actively decide on when to convey information to the controller. Classic examples of such a setup include load frequency control in power systems \cite{wen2015event}, scheduled control in artificial pancreas \cite{chakrabarty2017event}, and intermittent control of inverted pendulum modeled human motor control \cite{gawthrop2011intermittent}, among others.

Let $\lambda > 0$ denote the scheduling cost per time instant whenever a communication happens between the scheduler and the controller.
% We assume that the scheduler communicates the measured state without delay over a reliable communication channel. 
Moreover, let us define the set of (possibly random) scheduling instants up to time $k$ as
\begin{align}
    {\tt T}_k := \{k_{\ell} \mid \ell = 1, \ldots, n_k\},
\end{align}
where $n_k$ is the total number of communications up to time $k$. 
Henceforth, we will let ${\tt T} := \lim_{k\to \infty} {\tt T}_k$. Let $a_k \in \{0,1\}$ denote the scheduling decision variable at time $k$.
That is, $k \in {\tt T}$ if, and only if, $a_{k} = 1$.

As a result, the incurred communication cost at time $k$ is given as:
\begin{align}\label{eq:sensing_cost}
    m_k := \lambda a_k.
\end{align}
Combining both the control and communication objectives from \eqref{eq:cntrl_obj} and \eqref{eq:sensing_cost}, the overall objective function is given as:
\begin{align}\label{eq:overall_cost}
    & J  = \mathbb E \Big[\! \sum_{k=0}^\infty \gamma^k (c(x_k,u_k) + \alpha m_k) \Big].
\end{align}
where $\gamma \in (0, 1]$ is the discount factor and $\alpha > 0$ is the tradeoff parameter balancing the control and scheduling costs. 
We may absorb the $\alpha$ within $m$ by defining new weights $\bar\lambda = \alpha \lambda$. 
Consequently, without loss of any generality, we assume $\alpha = 1$ for the remainder of this \textit{letter}. 
Finally, the expectation in \eqref{eq:overall_cost} is taken with respect to the stochasticity induced by the initial state, the system noise, and the possible randomization in the control and scheduling policies. 

Let us define the information available to the controller and the scheduler at time $k$ as
\begin{align*}
    {\tt I_{C}}(k) & := \{x_s, u_r \mid s \in {\tt T}_k, r \in [k\!-\!1]\},
    ~{\tt I_{C}}(0) \!:=\! \emptyset,  \\
    {\tt I_{S}}(k) & := \{x_s,u_r,{\tt T}_k \mid s \in [k], r \in [k-1]\},
    ~{\tt I_{S}}(0) \!:=\! \emptyset,
\end{align*}
where we have used the notation $[j]:= \{0, 1, \cdots, j\}$.

Let $\mu_c: {\tt I_{C}} \rightarrow \mathbb R^{n_u}$ denote a control policy which lies in the space of admissible control policies defined as $\mathcal{M}_{C}: = \{\mu_{C} \mid \mu_{C} $ is adapted to the sigma-field generated by ${\tt I_{C}}\}$. Further, let $\mu_s: {\tt I_{S}} \rightarrow \mathbb \{0,1\}$ denote a scheduler policy which lies in the space of admissible scheduling policies defined as $\mathcal{M}_{S}: = \{\mu_{s} \mid \mu_{s} $ is adapted to the sigma-field generated by ${\tt I_{S}}\}$. 
% Henceforth, we will refer to the pair $\mu: = (\mu_{c}, \mu_{s})$ as the overall policy.
With the introduction of the above policies, we are now interested in deriving the pair of optimal controller and scheduler policies $(\mu_{c}^*, \mu_{s}^*)$ which satisfy 
\begin{align}\label{eq:overall_problem}
    J^* &:= J(\mu_{c}^*, \mu_{s}^*) = \min_{(\mu_{c}, \mu_{s}) \in \mathcal{M}_C \times \mathcal{M}_S} J(\mu_{c}, \mu_{s}).
\end{align}

\section{Optimal Control Policy} \label{sec:OptController}
We begin this section by noting that the information sets of the controller and scheduler exhibit a partially nested structure:
\begin{align}\label{eq:nested_info}
    {\tt I_C}(k) \subseteq {\tt I_S}(k), ~~ \forall k \ge 0.
\end{align}
As a consequence, the optimization objective in \eqref{eq:overall_problem} can be rewritten as:
\begin{align}\label{separation_eqn}
J^* = \min_{\mu_s \in \mathcal{M}_S} \min_{\mu_c \in \mathcal{M}_C} J(\mu_c, \mu_s).
\end{align}
This formulation follows from the no-dual effect property of control \cite{feldbaum1961dual}. In dynamic sequential decision-making problems, such as the one considered in this work, a controller typically serves two purposes:

1) Feedback control: Directly applying a control action, and 

2) Reducing future uncertainty: choosing inputs that enhance future state estimation.

The above is often referred to as the dual effect of control \cite{feldbaum1961dual}.
However, due to \eqref{separation_eqn}, the scheduler fully determines when the state is transmitted to the controller. Consequently, the controller's role is restricted to exerting control actions, eliminating the dual effect. This leads to the emergence of a \textit{separation principle}, as indicated in \eqref{separation_eqn}. As a result, we can first compute the optimal controller and subsequently the optimal scheduler.

In the sequel, we will assume that the pair $(A,B)$ is controllable and the pair $(A, Q^{\nicefrac{1}{2}})$ is observable. Then, to derive the optimal controller, we first rewrite the objective in \eqref{eq:overall_cost} using completion of squares:
% \begin{small}
\begin{align}\label{eq:compl_squares}
    J & = \!\frac{\gamma}{1-\gamma}\tr( P K_W) + \E \Big[ \!\sum_{k=0}^\infty  \gamma^k \|u_k \!+\! \gamma \hat R^{-1}B^\top P A x_k\|_{\hat R}^2 \Big] \nonumber \\
    & \qquad +  \mathbb E \left [\sum_{k=0}^\infty \gamma^k m_k \right],
\end{align}
% \end{small}
where $\hat R := R+\gamma B^{\top }PB$ and $P \succeq 0$ is the unique positive semi-definite solution to the following algebraic Riccati equation
\begin{align}\label{eq:ARE}
P = \gamma A^{\top }PA - \gamma^2 A^{\top }PB\hat R^{-1}B^{\top }PA + Q.
\end{align}
Thus, with sampled measurements, the optimal controller takes the form
\begin{align} \label{eq:optimalControl}
    u^*_k : = \mu^*_c({\tt I_C}(k)) = - \gamma \hat R^{-1}B^\top P A \hat{x}_k, ~\forall k \in [T]
\end{align}
where  $\hat{x}_k = \E[x_k \mid {\tt I_C}(k)] $ is the least-squares estimate for $x_k$ under the information available to the controller. Furthermore, using the zero-mean property of the system noise, we let the estimate $\hat x_k$ follow the dynamics:
\begin{align*}
    & \hat{x}_{k+1}  = (1-a_{k+1}) (A \hat{x}_k + B u_k) + a_{k+1} x_{k+1}, ~\forall k \geq 0
\end{align*}
with $\hat x_0 = (1-a_0)\E[x_0] + a_0 x_0$. Under the optimal control strategy of \eqref{eq:optimalControl}, we may further simplify the estimator dynamics to obtain 
\begin{align} \label{eq:OptimalEstimator}
    & \hat{x}_{k+1}  \!= \!(1-a_{k+1})(A \!-\! \gamma B \hat R^{-1} B^\top P A )\hat{x}_k + a_{k+1} x_{k+1}, ~\forall k.
\end{align} 
This completes our discussion of the optimal control policy. Next, we turn toward constructing an optimal scheduler policy using the deep RL framework.

\section{Optimal Scheduler Policy}\label{sec:optSensor}
Let us begin by defining the estimation error at the controller as $e_k = x_k - \hat x_k$. Consequently, the error dynamics can be written using \eqref{eq:plant} and \eqref{eq:OptimalEstimator} as
\begin{align}\label{eq:est_error}
    e_{k+1} = (1-a_{k+1}) (Ae_k + W_k), ~~\forall k 
\end{align}
with $e_0 = (1-a_0)(x_0 - \E [x_0])$. We note that the evolution of error $e$ is fully determined by the scheduling policy $\mu_s$. Next, we substitute the optimal control law \eqref{eq:optimalControl} into \eqref{eq:compl_squares} to obtain the final expression for the closed-loop cost function:
% \begin{small}
\begin{align} \label{eq:OptJ}
J= \frac{\gamma}{1-\gamma} \tr(P K_W) + \E \left[\sum_{k=0}^\infty \gamma^k (\|e_k\|_{\Gamma}^2 + \lambda a_k )\right],
\end{align}
% \end{small}
where we defined $\Gamma:= \gamma^2 A^\top P B \hat R^{-1} B^\top PA$.
 Accordingly, we can introduce a Markov Decision Process (MDP) ${\tt M}:= ({\tt S}, {\tt A}, {\tt P}, {\tt C})$, where ${\tt S}$ denotes the state space, ${\tt A}$ denotes the action space, ${\tt P}$ denotes the state transition dynamics and ${\tt C}$ denotes the running cost incurred when an action is executed. The state of MDP is the estimation error, with its transition dynamics ${\tt P}$ given by the difference equation \eqref{eq:est_error}, and thus ${\tt S} = \mathbb R^{n_x}$. The set of actions is that of scheduling actions $\{0,1\}$. Finally, the running cost is given by ${\tt C}(e,a) = \|e\|^2_\Gamma + \lambda a$. Subsequently, for a given policy $\mu_s$, we define the state-action value function $Q^{\mu_s}(e,a): {\tt S} \times {\tt A} \rightarrow \mathbb R$ of the above MDP as
\begin{align}
    Q^{\mu_s}(e,a) = \E \left[\sum_{k = 0}^\infty \gamma^k {\tt C}(e_k,a_k) \mid \mu_s, e_0 = e, a_0 = a \right].
\end{align}

 Using the Bellman equation, for any time $k$, we write the optimal state-action value function as
 \begin{align} \label{eq:Q-func}
     Q(e_k,a_k) & = \inf_{\mu_s \in \mathcal{M}_S} Q^{\mu_s}(e_k,a_k) \nonumber \\
     & = \E[ {\tt C}(e_k, a_k) + \gamma \min_{a' \in {\tt A}} Q(e_{k+1}, a')].
 \end{align}
 Subsequently, one may compute the optimal scheduling policy~as
 \begin{align}\label{eq:Q_policy}
     \mu_s^*({\tt I_S}(k)) = \arg\min_{a_k \in {\tt A}} Q(e_k,a_k). 
 \end{align}
In the given definition, the closed-form expression of the state-action value function remains unknown, making it impossible to compute the scheduling policy exactly. Thus, in this work, we employ RL to train an ``agent'' to interact with the environment and learn an optimal policy that minimizes cumulative costs. An associated challenge with exact policy computation even with RL is the continuous nature of the state space which prevents the exact computation of this value function, unlike tabular MDPs with a finite state space \cite{sutton1998reinforcement, wei2020model, vamvoudakis2021handbook}. To overcome this issue, function approximation techniques are necessary to (approximately) represent the Q-function within a chosen function class, which can then be used to derive an approximate policy using \eqref{eq:Q_policy}. Commonly used function classes in the literature \cite{sutton1998reinforcement} include linear functions, polynomial functions, and radial basis functions, among others. In this work, however, we utilize a deep neural network (DNN) to approximate the Q-function, leading to the deep Q-learning method \cite{mnih2013playing,mnih2015human} (which we detail in the next to the following subsection).
However, before proceeding with the deep-Q learning framework, we will derive some sub-optimal yet effective policies which will later be used for verifying the efficiency of the learned policy. 

\subsection{Suboptimal Scheduling Policies} 

For this discussion, let us define and simplify the Bellman value function:
\begin{align} \label{eq:Ve}
    V(e)  & = \min_{a\in {\tt A}} Q(e,a) \nonumber\\
    & = \min \{Q(e,0), Q(e,1)\} \nonumber \\
    & \overset{\eqref{eq:Q-func}}{=} \min\{\|e\|^2_\Gamma + \gamma \E[ V(Ae + w)],~~\|e\|^2_{\Gamma}+\lambda + \gamma V(0)\} \nonumber\\
    & = \|e\|^2_\Gamma + \min\{ \gamma \E[ V(Ae + w)],~~\lambda + \gamma V(0)\}, 
\end{align}
where the expectation in $\E[V(Ae+w)]$ is taken over the randomness of $w$. 
Now, picking $e=0$, one may obtain an upper bound on $V(0)$ to be 
\begin{align} \label{eq:V0}
    V(0) \le \nicefrac{\lambda}{(1-\gamma)}.
\end{align}
Subsequently, we present the following lemma which characterizes the qualitative behavior of the scheduling landscape.
\begin{lemma}
    A sufficient condition for not scheduling a communication at an error state $e$ is 
    \begin{align}\label{eq:noschedule}
        \|e\|^2_{A^\top \Gamma A} < \lambda\left( \nicefrac{1}{\gamma} -1 \right) - \tr(\Gamma K_W),
    \end{align}
    and a sufficient condition for scheduling a communication is 
    \begin{align} \label{eq:toschedule}
        \|e\|^2_{A^\top \Gamma A} > \nicefrac{\lambda}{\gamma(1-\gamma)} - \tr(\Gamma K_W).
    \end{align}
\end{lemma}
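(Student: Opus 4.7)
The plan is to analyze the minimization inside the Bellman recursion \eqref{eq:Ve} directly. Writing $V(e)=\|e\|_\Gamma^2+\min\{\gamma\,\E[V(Ae+w)],\ \lambda+\gamma V(0)\}$, I note that not-scheduling is optimal at state $e$ whenever $\gamma\,\E[V(Ae+w)]\le \lambda+\gamma V(0)$, while scheduling is optimal whenever the reverse strict inequality holds. Thus it suffices to produce a deterministic upper bound on $\gamma\,\E[V(Ae+w)]$ that is less than $\lambda+\gamma V(0)$ to certify \eqref{eq:noschedule}, and a deterministic lower bound on $\gamma\,\E[V(Ae+w)]$ that exceeds $\lambda+\gamma V(0)$ to certify \eqref{eq:toschedule}.

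To get the two bounds on $V$, I would exploit \eqref{eq:Ve} itself: since both arguments of the inner $\min$ are non-negative, $V(y)\ge \|y\|_\Gamma^2$; and since the $\min$ is upper-bounded by its second argument, $V(y)\le \|y\|_\Gamma^2+\lambda+\gamma V(0)$ for every $y$. Setting $y=Ae+w$ and taking expectations, the zero-mean property of $w$ and the identity $\E[\|Ae+w\|_\Gamma^2]=\|e\|_{A^\top\Gamma A}^2+\tr(\Gamma K_W)$ give
\begin{align*}
\|e\|_{A^\top\Gamma A}^2+\tr(\Gamma K_W)\ \le\ \E[V(Ae+w)]\ \le\ \|e\|_{A^\top\Gamma A}^2+\tr(\Gamma K_W)+\lambda+\gamma V(0).
\end{align*}

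For the scheduling condition \eqref{eq:toschedule}, I would combine the lower bound with the upper bound $V(0)\le \lambda/(1-\gamma)$ from \eqref{eq:V0}, which implies $\lambda+\gamma V(0)\le \lambda/(1-\gamma)$. Requiring $\gamma(\|e\|_{A^\top\Gamma A}^2+\tr(\Gamma K_W))>\lambda/(1-\gamma)$ is then sufficient, and rearranging yields exactly \eqref{eq:toschedule}. For the no-scheduling condition \eqref{eq:noschedule}, I would use the upper bound on $\E[V(Ae+w)]$ and ask that $\gamma\bigl(\|e\|_{A^\top\Gamma A}^2+\tr(\Gamma K_W)\bigr)+\gamma\lambda+\gamma^2 V(0)<\lambda+\gamma V(0)$. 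After moving the $V(0)$ terms to one side, what remains is $\gamma(\|e\|_{A^\top\Gamma A}^2+\tr(\Gamma K_W))<(1-\gamma)\lambda+(1-\gamma)\gamma V(0)$; dropping the non-negative $V(0)$ term gives a strictly more stringent sufficient condition that simplifies to \eqref{eq:noschedule}.

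The only mildly delicate step is making sure the trivial sandwich $\|y\|_\Gamma^2\le V(y)\le \|y\|_\Gamma^2+\lambda+\gamma V(0)$ is tight enough, i.e.\ that dropping $\gamma V(0)$ in the no-schedule direction and using $V(0)\le \lambda/(1-\gamma)$ in the schedule direction together preserve a non-empty gap; inspection of \eqref{eq:noschedule}–\eqref{eq:toschedule} confirms that the two thresholds differ by exactly $\lambda/(\gamma(1-\gamma))-\lambda(1/\gamma-1)=\lambda(1+\gamma)/(1-\gamma)>0$, so the conditions are mutually consistent. Everything else is algebraic rearrangement, and no additional structural assumption beyond what is already invoked for \eqref{eq:V0} and the existence of the Riccati solution in \eqref{eq:ARE} is needed.
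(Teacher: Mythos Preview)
Your proposal is correct and follows essentially the same route as the paper: both arguments compare $\gamma\,\E[V(Ae+w)]$ against $\lambda+\gamma V(0)$, expand $V$ via \eqref{eq:Ve}, and then bound the inner $\min$ above by its second argument $\lambda+\gamma V(0)$ and below by $0$, finishing with $V(0)\ge 0$ for \eqref{eq:noschedule} and $V(0)\le\lambda/(1-\gamma)$ for \eqref{eq:toschedule}. Your packaging of these two bounds as the pointwise sandwich $\|y\|_\Gamma^2\le V(y)\le \|y\|_\Gamma^2+\lambda+\gamma V(0)$ is a slightly cleaner presentation of the identical idea.
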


\begin{proof}
    Notice that we may compactly write the $Q$-function~as 
    \begin{align*}
        Q(e,a) = \|e\|^2_\Gamma + a(\lambda + \gamma V(0)) + (1-a)\gamma \E [V(Ae+w)].
    \end{align*}
    Consequently, $a^*=0$ is \textit{strictly} optimal at $e$ if, and only if 
    \begin{align} \label{eq:inequality1}
        \E [V(Ae+w)] < \nicefrac{\lambda}{\gamma} + V(0).
    \end{align}
    Using \eqref{eq:Ve} on the LHS of \eqref{eq:inequality1} yields 
    \begin{align} \label{eq:inequality2}
        \E[\|Ae+w\|^2_\Gamma] + & \min\{ \gamma \E[ V(A^2e + Aw + w')],~\lambda + \gamma V(0) \} \nonumber \\
        &< \nicefrac{\lambda}{\gamma} + V(0),
    \end{align}
where the expectation in $ \E[\|Ae+w\|^2_\Gamma]$ is taken over the randomness of $w$ and that in $\E[ V(A^2e + Aw + w')]$ is taken over the randomness of $w$ and $w'$.
Notice that 
\begin{align} \label{eq:inequality3}
       \|e\|^2_{A^\top \Gamma A} + \tr(\Gamma K_W) +\lambda + \gamma V(0) < \nicefrac{\lambda}{\gamma} + V(0)
\end{align}
is a sufficient condition to satisfy \eqref{eq:inequality2}, where the latter is sufficient to ensure $a^*=0$ at $e$, thus making \eqref{eq:inequality3} a sufficient condition for no scheduling at $e$. 
Since $V(0) \ge 0$, one may verify that \eqref{eq:noschedule} is sufficient for not scheduling a transmission. 

To obtain the sufficient conditions for making $a^*=1$ optimal, we change the inequality directions in \eqref{eq:inequality1} and \eqref{eq:inequality2}. 
Furthermore, by noticing that $V(e) \ge 0$ for all $e\in \R^{n_x}$, one may verify that 
\begin{align} \label{eq:inequality4}
       \|e\|^2_{A^\top \Gamma A} + \tr(\Gamma K_W) > \nicefrac{\lambda}{\gamma} + V(0)
\end{align}
is a desired sufficient condition. 
Further, using \eqref{eq:V0}, we conclude that \eqref{eq:toschedule} is a sufficient condition for $a^*=1$ at $e$. This completes the proof.
\end{proof}

\begin{corollary}
    The optimal scheduling strategy is to always schedule if the scheduling cost satisfies  $\lambda \le \gamma(1-\gamma)\tr(\Gamma K_W)$.
\end{corollary}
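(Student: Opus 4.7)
The plan is to specialize the preceding Lemma to the regime where scheduling is cheap. Under the hypothesis $\lambda \le \gamma(1-\gamma)\tr(\Gamma K_W)$, the right-hand side of the sufficient scheduling condition \eqref{eq:toschedule}, namely $\nicefrac{\lambda}{\gamma(1-\gamma)} - \tr(\Gamma K_W)$, becomes non-positive. Since $\Gamma \succeq 0$ implies $A^\top \Gamma A \succeq 0$, the quantity $\|e\|^2_{A^\top \Gamma A}$ is non-negative for every $e \in \R^{n_x}$. Hence the Lemma's sufficient condition will be satisfied uniformly in $e$, which should let me conclude that $a^*_k = 1$ is optimal at every reachable error state.

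Concretely, I would proceed in four short steps. First, I would rearrange the hypothesis to read $\nicefrac{\lambda}{\gamma(1-\gamma)} \le \tr(\Gamma K_W)$. Second, I would certify $A^\top \Gamma A \succeq 0$ by inspection of the definition $\Gamma = \gamma^2 A^\top P B \hat R^{-1} B^\top P A$; since $\hat R \succ 0$, writing $\hat R^{-1} = N^\top N$ gives $\Gamma = \gamma^2 (N B^\top P A)^\top (N B^\top P A) \succeq 0$, and conjugating by $A$ preserves semidefiniteness. Third, I would chain these two facts to obtain $\|e\|^2_{A^\top \Gamma A} \ge 0 \ge \nicefrac{\lambda}{\gamma(1-\gamma)} - \tr(\Gamma K_W)$ for every $e$. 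Fourth, I would invoke the Lemma to declare the pointwise optimality of $a=1$, yielding the stationary ``always schedule'' policy as a solution to \eqref{eq:Q_policy}.

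The main (and very mild) obstacle is that the Lemma's sufficient condition \eqref{eq:toschedule} is a strict inequality, whereas the corollary allows the equality $\lambda = \gamma(1-\gamma)\tr(\Gamma K_W)$. In that boundary case the strict inequality can fail at $e=0$ (and on the nullspace of $A^\top \Gamma A$, if it is rank-deficient), where $Q(e,0)$ and $Q(e,1)$ tie. I would dispense with this by noting that at such tied states either action attains the Bellman minimum, so the ``always schedule'' policy remains among the optimizers without sacrificing optimality. Apart from this minor boundary bookkeeping, the corollary is a direct arithmetic specialization of the Lemma and should require no further machinery.
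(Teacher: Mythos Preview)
Your proposal is correct and matches the paper's own proof, which simply states that the result follows directly from \eqref{eq:toschedule}. Your additional care in verifying $A^\top \Gamma A \succeq 0$ and in handling the tie at the boundary $\lambda = \gamma(1-\gamma)\tr(\Gamma K_W)$ goes beyond what the paper records, but the underlying argument is identical.
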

\begin{proof}
    The proof follows directly from \eqref{eq:toschedule}.
\end{proof}

 \begin{figure}[h]
	\centering
	\includegraphics[width=0.9\columnwidth ]{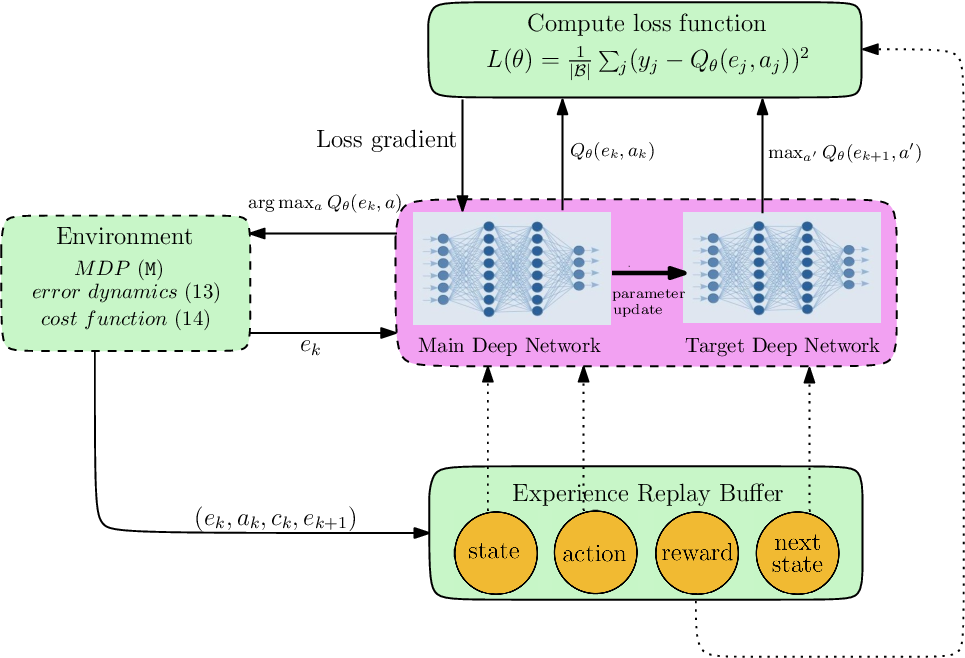}
 \vspace{-0.15cm}
	\caption{\small{Schematic representation of the \textbf{\texttt{InterQ}} Algorithm.
	}}
	\label{Fig:DQN_schematic}
 	\vspace{-0.2cm}
\end{figure}

We will later use this lemma  to validate the scheduling policy learned by the DQN framework. Next, we present our deep RL algorithm, \textbf{\texttt{InterQ}}, to compute the scheduling instances.

\subsection{\textbf{\texttt{InterQ}} for Scheduling Policy Computation}
Q-learning is an off-policy RL algorithm which estimates the optimal state-action value function using the Bellman equation. Deep Q-learning extends this approach by employing a DNN to approximate the Q-function. Unlike tabular Q-learning, which maintains a table of Q-values, deep Q-learning utilizes a function approximator $Q_\theta(e,a)$ with parameters $\theta$ (neural network weights) to estimate the Q-function. 

A major challenge in training a DNN with Q-learning is the correlation between consecutive experiences, which can lead to instability. Since neural networks typically require independent and identically distributed (i.i.d.) samples for effective training, sequential transitions can cause issues. To address this, {\tt Experience Replay} is used \cite{mnih2015human}, where a buffer $\mathcal{D}$ stores past experiences $(e,a,c,e')$ consisting of the error, action, incurred cost, and the next state. Instead of updating the Q-network using only the most recent experience, a mini-batch of samples is randomly drawn from the buffer for training. This technique reduces correlation between consecutive samples, improves sample efficiency by allowing experiences to be reused multiple times, and lowers the variance in weight updates, leading to a more stable learning.

Another challenge in Deep Q-learning is the moving target problem. Since the same Q-network is used both to estimate current Q-values and to compute target Q-values, the target shifts dynamically, leading to oscillations during learning. To mitigate this, a separate target Q-network is introduced, which is updated at a slower rate, typically after a fixed number of episodes. This stabilizes training by ensuring that target values change gradually.

A detailed algorithm as presented in Algorithm \ref{Fig:DQN_schematic} and its schematic is shown in Fig. \ref{DQN_algorithm}.
In summary, Algorithm \ref{DQN_algorithm} proceeds as follows: the process begins by initializing both the main Q-network and the target Q-network with random weights $\theta$ and $\theta'$, respectively, with $\theta = \theta'$ (lines 4-5). Next, an $\epsilon$--greedy policy is used to collect experiences (i.e., ($e,a,c,e'$)) based on system dynamics and the cost function, storing them in the replay buffer $\mathcal{D}$ (lines 9-12). A mini-batch is then sampled from $\mathcal{D}$, target values $y$ are computed, and the DNN is trained via gradient descent using a squared loss function with learning rate 
$\eta$ (lines 14-19). Finally, the target network is periodically updated at a fixed interval determined by the target update frequency $f_{\text{target}}$ (line 22).

\begin{algorithm}[h]
    \caption{\textbf{\texttt{InterQ}}: A DQN-based Intermittent Scheduling}
    \begin{algorithmic}[1]
    \label{DQN_algorithm}
        \STATE \textbf{Input:} System parameters: $A,B,Q,R, K_W,\lambda$, epoch length $T$, learning rate $\eta$, target update frequency $f_{\text{target}}$
        \STATE \textbf{Input: }exploration parameters $\epsilon = \epsilon_{\text{start}}, \epsilon_{\text{min}}, \epsilon_{\text{decay}}$
        \STATE \textbf{Input:} capacity $M$ of replay memory $\mathcal{D}$, {batch size} $b$ 
        \STATE Initialize action-value network $Q_\theta(e, a)$ with random weights $\theta$
        \STATE Initialize target network $Q_{\theta'}(e, a)$ with weights $\theta' = \theta$
\FOR{each episode }
        \STATE \textbf{Initialize} the estimation error $e_0$ randomly
    \FOR{ $k = 1: T$}
        \STATE \textbf{Select action} $a_k$ using $\epsilon$-greedy policy:
        \STATE \quad $a_k = \begin{cases} 
            \arg\min_{a} Q_\theta(e_k, a), & \!\!\text{with probability } 1 \!-\! \epsilon \\
            \text{random action}, & \!\! \text{with probability } \epsilon
        \end{cases}$
        \STATE Execute action $a_k$, observe cost $c_k$, generate a noise sample $W_k$, and obtain the next state $e_{k+1}$ using \eqref{eq:est_error}.
        \STATE Store experience $(e_k, a_k, c_k, e_{k+1})$ in $\mathcal{D}$
        \STATE \textbf{If} $M > b$:
        \STATE \quad Sample random mini-batch of transitions $(e_j, a_j, c_j, e_{j+1})$ from $\mathcal{D}$ of batch size $b$
        \STATE \quad Compute target Q-value for each sampled transition:
        \STATE \quad \quad $y_j = 
        \begin{cases} 
            c_j, & \text{if $j \!=\! T\!-\!1$}, \\
            c_j + \min\limits_{a'} Q_{\theta'}(e_{j+1}, a'), & \text{otherwise}.
        \end{cases}$
        \STATE \quad Compute least square loss function:
        
        \quad\quad $L(\theta) = \frac{1}{b} \sum_{j} (y_j - Q_\theta(e_j, a_j))^2$
        \STATE \quad Perform gradient descent on $\theta$: $\theta^+ = \theta - \eta \nabla_\theta L(\theta)$
        \STATE \quad Update exploration rate: $\epsilon \leftarrow \max(\epsilon_{\text{min}}, \epsilon \times \epsilon_{\text{decay}})$
    \ENDFOR
    \STATE \textbf{if} mod(episode,  $f_{\text{target}}) == 0$  
    \STATE \quad $\theta' \leftarrow \theta$
\ENDFOR
    \end{algorithmic}
\end{algorithm}

% \vspace{-4mm}
\section{Numerical Studies}\label{sec:numSims}
In this section, we corroborate the theoretical findings of the previous sections through simulations. For all simulations, we consider a two-dimensional unstable dynamics (i.e., $n_x=2$ in \eqref{eq:plant}) with the system matrices being $A =[1.5, 2; 0, 1.51],~B=[0;1]$, Gaussian noise with $K_W = [1, 0; 0, 1]$, the cost parameters being $Q = [1, 0; 0, 1]$, $ R=1$ and the discount factor $\gamma = 0.95$. Using the ARE \eqref{eq:ARE}, one can compute $P = [5.70, 7.34; 7.34, 14.36]$. Subsequently, we obtain  $\Gamma = 14.64$. We take the DNN to have 4 fully connected layers of dimensions $100 \times 100$ with GeLu (Gaussian error Linear units) activation functions. We set the memory size to 1000, the batch size to 16, the learning rate to 0.01, $\epsilon_{\text{start}} = 1.0$, $\epsilon_{\text{decay}} = 0.995$, $\epsilon_{\min} = 0.01$, and use \texttt{\textbf{Adam}} optimizer to train the DNN.

\textbf{1) Scheduling Landscape: } In our initial study, we examine the structure of the scheduling landscape, as illustrated in Fig.~\ref{Fig:sch_landscape}. 
To achieve this, we randomly generate multiple error vectors and determine the corresponding optimal scheduling actions using the trained DQN. 
Each chosen error state is plotted using a circular marker (see zoomed subfigure), where the colors of the circles correspond to the scheduling decisions returned by the trained network---magenta indicating a decision \textit{to schedule}, and cyan indicating \textit{not to schedule}.
% We then plot the first component of each 2D error vector along the x-axis and the second component along the y-axis. 
% The associated scheduling decision is represented by color: red indicates a decision \textit{to schedule}, while blue represents a decision \textit{not to schedule}. 
Additionally, we overlay a (blue) ellipse on the plot, highlighting that the scheduling landscape closely resembles an elliptical shape (whose boundary is defined, for instance, by $e_k^\top Z e_k = 1$, for some matrix $Z$). Finally, we also superimpose the (red) ellipses generated by the sufficient conditions \eqref{eq:noschedule} and \eqref{eq:toschedule} which sandwich the blue ellipse, and further demonstrate the consistency of the solution proposed by \textbf{\texttt{InterQ}}.

\begin{figure}[t]
	\centering
	\includegraphics[width=0.7\columnwidth ]{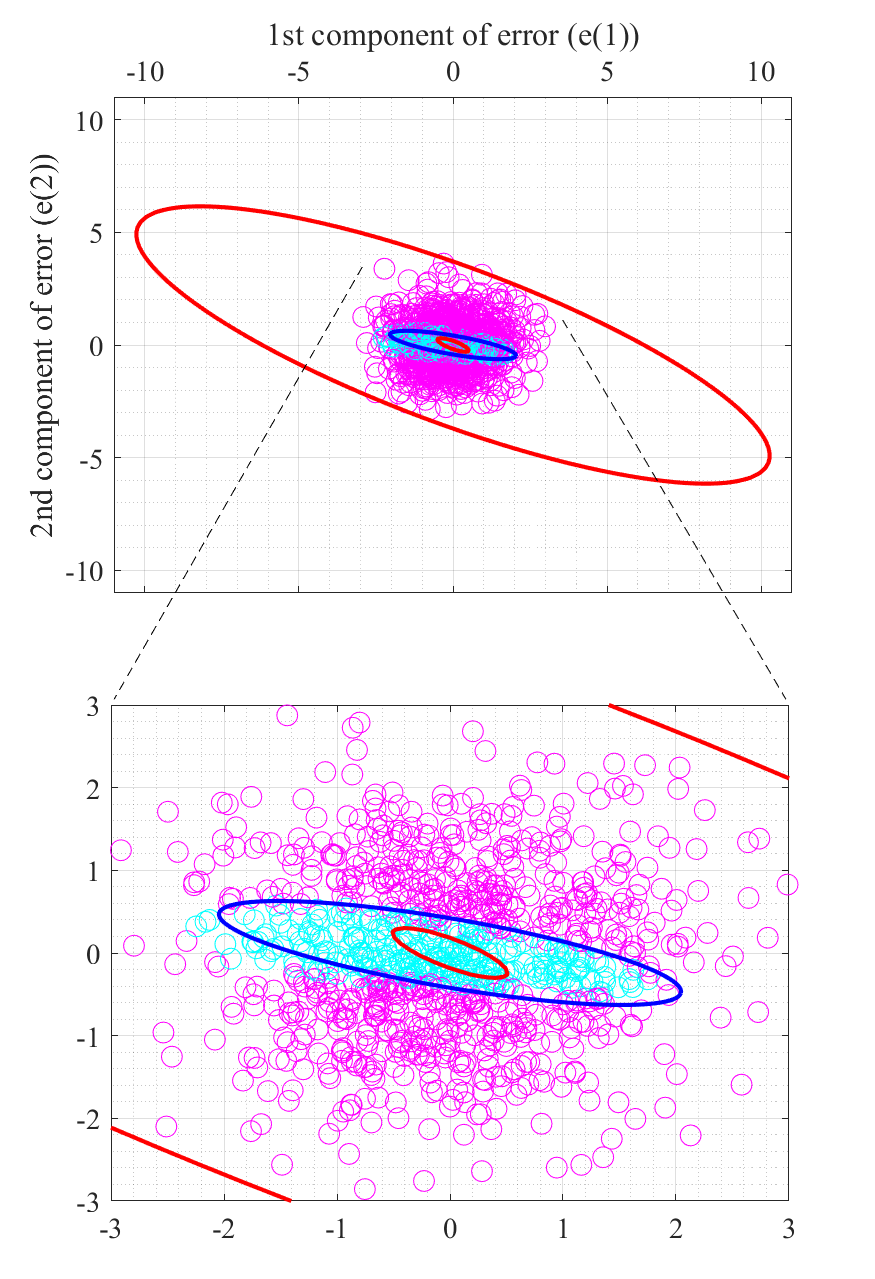}
 \vspace{-0.3cm}
	\caption{\small{Figure shows the communication-control trade-off curves (on the top) with its zoomed version (on the bottom); The outermost and innermost ellipses (in red) are plotted using \eqref{eq:toschedule} and \eqref{eq:noschedule}, respectively; the blue ellipse approximates the scheduling landscape generated by \textbf{\texttt{InterQ}}.
	}}
	\label{Fig:sch_landscape}
 	\vspace{-0.4cm}
\end{figure}

\textbf{2) Comparison with baselines:} In Fig.~\ref{Fig:comp_baseline}, we present a comparison of our algorithm with two baseline policies: \textbf{i) BS1:} periodic scheduling policy with period $\tau$, and \textbf{ii) BS2:} error-norm based triggering policy: $a_k = \mathbb I[\|e_k\|^2 \geq \tau]$.

% \textbf{iii) BS3: } Consistent event-triggered policy \cite{antunes2019consistent}: $a_k = \mathbb I [\|e_k\|^2_\Gamma \geq \tau]$.
%
\begin{figure}[h]
	\centering
	\includegraphics[width=\columnwidth ]{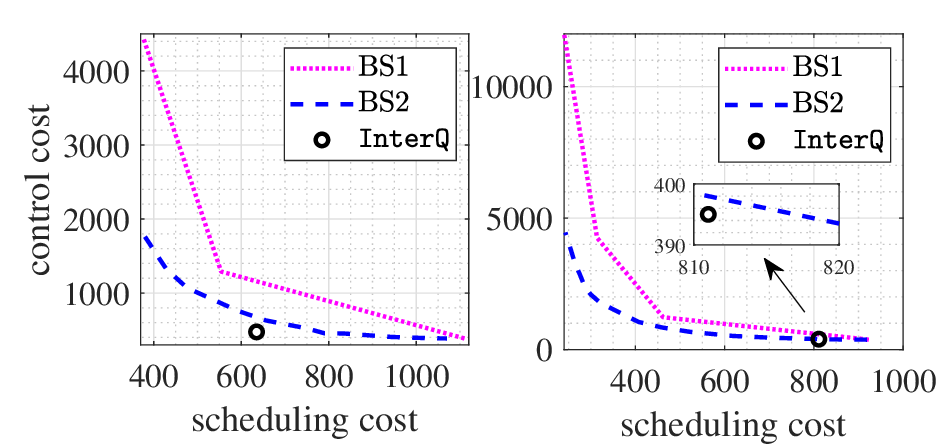}
 \vspace{-0.3cm}
	\caption{\small{Communication-control trade-off curves with $\lambda=50$ (on the left) and $\lambda=60$ (on the right).
	}}
	\label{Fig:comp_baseline}
 	\vspace{-0.3cm}
\end{figure}

In Fig.~\ref{Fig:comp_baseline}, we present two subfigures, each corresponding to a different value of $\lambda$. Each subfigure displays two Pareto-optimal curves that illustrate the trade-off between scheduling and control costs for the two baseline approaches. Our results indicate that \textbf{\texttt{InterQ}} consistently outperforms both BS1 and BS2, yielding a superior policy in terms of Pareto optimality.

\textbf{2) Effect of memory size $M$ on training stability:} In all experiments, a memory size of 1000 units is used. While a larger memory generally enhances the performance of the algorithm, we observed that a smaller value of $M$ can also be beneficial. This is because, for a continuous unbounded state space, a single poor experience $(e,a,c,e')$ can destabilize the loss function, making recovery difficult. A smaller memory size helps discard such detrimental experiences by replacing old ones with new, ensuring a more stable training. Therefore, selecting an appropriate memory size is crucial for achieving stable learning.

\textbf{3) Comparison with baselines for different noise distributions: }
Next, we also evaluate \textbf{\texttt{InterQ}} with the system noise distributed as a uniform random variable between -1 and 1, and compare our results with the baselines.
\begin{figure}[h]
\vspace{-2mm}
	\centering
	\includegraphics[width=0.56\columnwidth ]{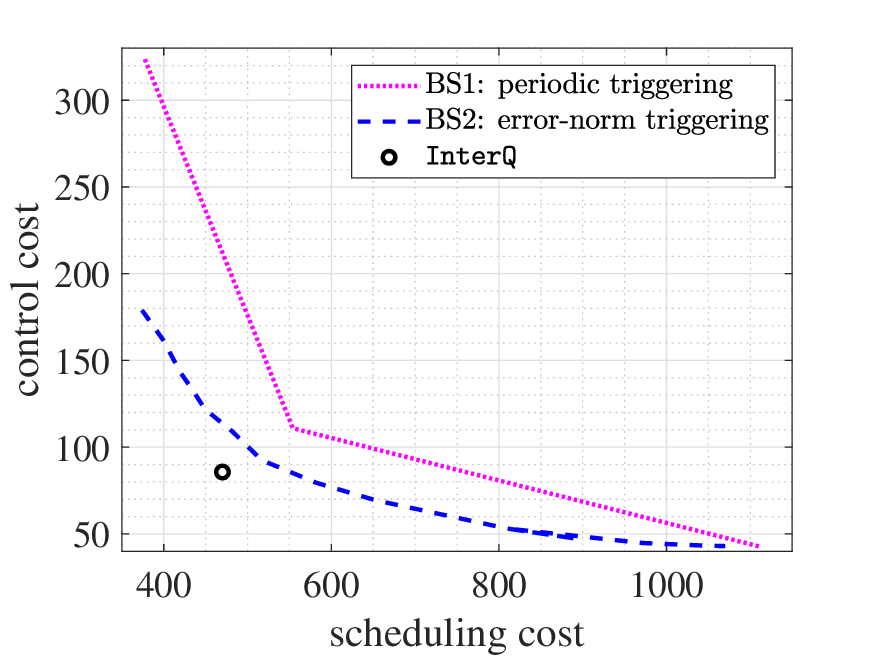}
 \vspace{-0.1cm}
	\caption{\small{Communication-control trade-off curves with $\lambda=60$ and system noise distributed as a uniform random variable.
	}}
	\label{Fig:comp_baseline}
 	% \vspace{-0.6cm}
\end{figure}
Again, we observe that our algorithm outperforms the results produced by the baselines.

\textbf{4) Effect of choice of loss function on training stability: }
Finally, we observed that using the built-in huber loss function \cite{rubio2023robust} in PyTorch leads to better training stability compared to the Mean Squared Error (MSE) loss. The Huber loss strikes a balance between the Mean Absolute Error (MAE) and MSE, making it more robust to outliers--an important consideration given the presence of Gaussian noise in our setting. Unlike MSE, which heavily penalizes large errors, huber loss reduces the impact of extreme values, leading to more stable training and improved scheduling policy performance.

This completes our numerical analysis of the proposed deep RL approach.
\vspace{-2mm}
\section{Conclusions}\label{sec:conc_disc}

In this \textit{letter}, we have addressed the communication-control co-design problem using deep reinforcement learning. Specifically, we first presented the controller-estimator design by invoking the separation principle. Next, we analyzed the qualitative properties of the scheduling policy. Finally, we introduced \textbf{\texttt{InterQ}}, a deep Q-learning-based framework for determining optimal scheduling instances. Through extensive numerical evaluations, we validated our theoretical findings and demonstrated the superiority of our approach over multiple baseline methods.

An interesting future direction would be to incorporate the \textit{no-communication events} into the information set of the controller. This makes the design of the controller and the scheduler coupled, and hence, the estimator is no longer linear. Thus, it would be interesting to extend our RL framework toward joint design for both linear and non-linear systems.

\bibliography{references,refs}
\bibliographystyle{IEEEtran}

\end{document}